\def \Z {{\mathbb Z}}
\def \C {{\mathbb C}}
\def \A {{\mathcal A}}
\def \M {{\mathbb M}}
\def \Bl {{\mathcal B}}
\def \c {{\mathcal{CC}}}
\def \J  {{\mathbf J}}
\def \ga {{\gamma}}
\def \la {{\lambda}}
\def \ze {{\zeta}}
\newtheorem{theorem}{Theorem}[section]
\newtheorem{cor}[theorem]{Corollary}
\newtheorem{lemma}[theorem]{Lemma}
\newtheorem{definition}[theorem]{Definition}
\title{Adjacency Algebra of Unitary Cayley Graph}
\author{A.\;Satyanarayana Reddy\\(Shiv Nadar University, Dadri,
India, satya8118@gmail.com)}
\date{}
\begin{document}
\maketitle
\begin{abstract}
A few properties of unitary Cayley graphs are explored using their
eigenvalues. It is shown that the adjacency algebra of a unitary Cayley
graph is a coherent algebra. Finally, a class of
unitary Cayley graphs that are distance regular are also obtained.
\end{abstract}
{\bf{Key Words:}} Adjacency Algebra, Circulant Graph, Coherent
Algebra, Distance Regular Graph, Ramanujan's sum .\\
{\bf{AMS(2010):}} 05C25, 05C50

\section{Introduction and Preliminaries}\label{sec:intro}
Fix a positive integer $n$ and let $\M_n(\C)$ denote the algebra of
all $n \times n$ matrices over $\C$, the set of complex numbers.
Let $X$ be a simple graph/digraph on $n$ vertices. Then the adjacency
matrix of $X$, denoted by $A(X)=[a_{ij}]$ (or simply $A$), is an  $n
\times n$ matrix, where $a_{ij}$ equals $1$  when the vertices $i$ and
$j$ are adjacent ($\{i,j\}/(i,j)$ is an edge/directed edge ) in $X$ and $0$, otherwise.
The {\em adjacency algebra} of $X$, denoted $\A(X)$, is a subalgebra
of $\M_n(\C)$ and it consists of all polynomials in $A$ with
coefficients from $\C$. 

For any two vertices $u$ and $v$ of a connected graph
$X$, let
$d(u,v)$ denote the length of the shortest path from $u$ to $v$.
Then the  {\em diameter} of a connected graph $X=(V,E)$ is $\max\{d(u,v):
\; u,v \in V\}$. It is shown in Biggs~\cite{biggs} that if $X$ is
a connected graph with diameter $D$, then $D+1\leq \dim(\A(X)) \leq n$,
where $\dim(\A(X))$ is the dimension of $\A(X)$ as a vector space
over $\C$. We now state the following two results
associated with connected regular graphs.

\begin{lemma}[A. J. Hoffman~\cite{hof}]\label{lem:J}
A graph $X$ is a connected  regular graph if and only if $\J\in
\A(X)$, where $\J$ is the matrix of all $1$'s.
\end{lemma}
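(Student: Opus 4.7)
I would split the statement into its two implications. For the (harder) forward direction, the plan is to exploit the spectral decomposition of $A$: since $X$ is a connected regular graph of degree $k$, one has $A\on=k\on$, and a Perron--Frobenius argument forces $k$ to be a simple eigenvalue with one-dimensional eigenspace $\mathrm{span}(\on)$. Listing the distinct eigenvalues of $A$ as $k=\la_1>\la_2>\cdots>\la_s$, I would form the Lagrange-type polynomial $p(x)=\prod_{i\geq 2}(x-\la_i)/(k-\la_i)$; evaluated at $A$ it acts as $1$ on $\on$ and as $0$ on every other eigenvector, hence equals the orthogonal projector $\tfrac{1}{n}\J$. This yields $\J=n\,p(A)\in\A(X)$.

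For the converse, suppose $\J=q(A)$ for some $q\in\C[x]$. I would extract regularity and connectedness separately. Regularity follows by computing entries: $(A\J)_{ij}=\deg(i)$ while $(\J A)_{ij}=\deg(j)$, and since $\J$, being a polynomial in $A$, commutes with $A$, all vertex degrees must coincide. Connectedness I would obtain by contradiction: if $X$ had $c\geq 2$ components, then in a basis respecting the components $A$ is block diagonal, and every polynomial in $A$ inherits this block-diagonal shape; but $\J$ has nonzero off-block entries, ruling out $c\geq 2$.

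The main obstacle I expect is the forward direction, specifically justifying the simplicity of the Perron eigenvalue $k$. If the $k$-eigenspace were larger than $\mathrm{span}(\on)$, the polynomial $p(A)$ would be the projector onto that larger subspace and would not coincide with $\tfrac{1}{n}\J$, so the argument collapses. Once connectedness is translated via Perron--Frobenius into a one-dimensional top eigenspace, the remaining work is routine linear-algebra bookkeeping with Lagrange interpolation at the distinct eigenvalues of $A$.
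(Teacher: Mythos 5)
Your argument is correct and complete: the paper states this lemma as a cited result of Hoffman without giving a proof, and your Lagrange-interpolation/Perron--Frobenius argument for the forward direction together with the commutator and block-diagonal observations for the converse is exactly the standard proof from the literature. The one point you rightly flag as the crux --- simplicity of the eigenvalue $k$ so that $p(A)$ is the projector $\tfrac{1}{n}\J$ onto $\mathrm{span}(\on)$ --- is indeed supplied by Perron--Frobenius applied to the irreducible nonnegative matrix $A$ of a connected graph, so there is no gap.
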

A connected graph is a  {\em distance regular} if for any two vertices $u$ and $v$, the
number of vertices at distance $i$ from $u$ and $j$ from $v$ depends only on
$i$, $j$, and
the distance between $u$ and $v$. By definition, these graphs are regular. A distance regular graph with diameter $2$ is called  {\em strongly
regular graph}.

\begin{lemma}[S. S. Shrikhande and
Bhagwandas~\cite{shr}]\label{lem:strong}
A regular connected graph $X$ is strongly regular if and only if it has exactly
three distinct eigenvalues.
\end{lemma}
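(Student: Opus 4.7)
The plan is to attack the equivalence through the dimension of the adjacency algebra, using Lemma~\ref{lem:J} to place $\J$ inside $\A(X)$ and then reading off the strongly regular structure from a quadratic identity for $A$.

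For the forward direction, I would assume $X$ is strongly regular, meaning (per the definition just given) distance regular of diameter $2$. Then there exist constants $k,\lambda,\mu$ such that for each pair $(u,v)$, the number of common neighbours is $k$, $\lambda$, or $\mu$ according as $u=v$, $u\sim v$, or $u\neq v$ with $u\not\sim v$. Translating this count into matrix language gives the identity
\[
A^{2} \;=\; k\,I \;+\; \lambda\, A \;+\; \mu\,(\J - I - A),
\]
so $A^{2}$ lies in the span of $\{I,A,\J\}$. Restricting to the orthogonal complement of the all-ones vector (where $\J$ acts as $0$) yields a quadratic annihilating $A$ there, which together with the eigenvalue $k$ on the all-ones direction gives at most three eigenvalues; the diameter bound $D+1\leq \dim(\A(X))$ from Biggs forces at least three, so exactly three distinct eigenvalues occur.

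For the converse, assume $X$ is connected, $k$-regular, and has exactly three distinct eigenvalues. Since the minimal polynomial of $A$ then has degree $3$, one has $\dim(\A(X))=3$. By Lemma~\ref{lem:J}, $\J\in\A(X)$. I would next verify that $\{I,A,\J\}$ is linearly independent (using that $X$ is neither empty nor complete in the non-degenerate case, together with regularity), so these three matrices form a basis of $\A(X)$. Consequently $A^{2}=\alpha I+\beta A+\gamma \J$ for some scalars, and reading this equation entry-by-entry recovers exactly the defining counts of a strongly regular graph: the diagonal entries give $\alpha+\gamma=k$, the edge entries give $\beta+\gamma=\lambda$, and the non-edge off-diagonal entries give $\gamma=\mu$. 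Combined with the diameter bound (three eigenvalues force $D\leq 2$, and connectedness plus non-completeness force $D=2$), this yields strong regularity.

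The main obstacle I anticipate is not the algebra but the bookkeeping around degenerate cases: ensuring $\{I,A,\J\}$ really is independent, ruling out the complete and empty graphs cleanly, and justifying that diameter is exactly $2$ rather than $1$ so that the definition of strongly regular (distance regular with diameter $2$) actually applies. Once these edge cases are handled, the core argument is a one-line passage between the quadratic identity $A^{2}\in\mathrm{span}\{I,A,\J\}$ and the three intersection numbers.
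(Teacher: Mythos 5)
The paper does not prove this lemma; it is quoted verbatim from Shrikhande--Bhagwandas as a known result, so there is no internal proof to compare against. Your argument is the standard one and is correct: the identity $A^{2}=kI+\lambda A+\mu(\J-I-A)$ together with Hoffman's lemma and the bound $D+1\leq\dim(\A(X))$ carries both directions, and your handling of the degenerate cases (non-completeness follows automatically from having three distinct eigenvalues, which also pins $D=2$) is sound. The only point worth making explicit, since this paper defines ``strongly regular'' as ``distance regular of diameter $2$,'' is that the constancy of the common-neighbour counts $\lambda,\mu$ must be upgraded to constancy of all intersection numbers $p^{h}_{ij}$; for diameter $2$ this is routine inclusion--exclusion from $n$, $k$, $\lambda$, $\mu$, but it is a step your write-up currently elides.
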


The graph $X$ is  {\em distance transitive} if for all vertices $u,v,x,y$
of $X$ such that
$d(u,v)=d(x,y)$, then there is a $g$ in $Aut(X)$ (the automorphism group of graph
$X$) satisfying $g(u)=x$ and $g(v)=y$. Every distance transitive graph is
distance regular. To know more about distance regular and distance 
transitive graphs refer A. E. Brouwer, A. M. Cohen, A.
Neumaier~\cite{B:R}.

For two matrices $A, B \in \M_n(\C)$, their {\em  Hadamard product},
denoted $A \circ B$, is also an $n \times n$ matrix with $(A\circ
B)_{ij} = A_{ij} B_{ij}$ for $1 \le i, j \le n$. Two matrices $A$ and $B$ are
said to be {\em disjoint} if their Hadamard product is the zero matrix.
\begin{theorem}\cite{B:R}\label{thm:sym}
Let $\mathcal{M}$ be a vector space of symmetric $n\times n$
matrices. Then  $\mathcal{M}$ has a basis of mutually disjoint
$0,1$-matrices if and only if $\mathcal{M}$ is closed under Hadamard
multiplication.
\end{theorem}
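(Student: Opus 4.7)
One direction is immediate: if $\{E_1, \ldots, E_k\}$ is a basis of $\mathcal{M}$ consisting of mutually disjoint $0,1$-matrices, then $E_i \circ E_i = E_i$ (since each entry is in $\{0,1\}$) and $E_i \circ E_j = 0$ for $i \ne j$ by disjointness. Writing $A = \sum a_i E_i$ and $B = \sum b_i E_i$ in $\mathcal{M}$, bilinearity of $\circ$ gives $A \circ B = \sum a_i b_i E_i \in \mathcal{M}$, so $\mathcal{M}$ is closed under Hadamard multiplication.

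For the converse, assume $\mathcal{M}$ is closed under $\circ$. My plan is to exhibit the desired basis as the primitive orthogonal idempotents of a suitable Hadamard algebra. To invoke the Wedderburn structure theorem (which is formulated for unital algebras), I first enlarge to $\mathcal{M}' := \mathcal{M} + \C\, \J$, which is a commutative unital associative $\C$-algebra under $\circ$ since $\J$ is the $\circ$-identity of $\M_n(\C)$. The key calculation is that $\mathcal{M}'$ has no nonzero nilpotents: if $(M + c\J)^{\circ m} = 0$, then entrywise $(M_{ij} + c)^m = 0$ in $\C$, so $M_{ij} = -c$ for all $i, j$, whence $M = -c\J$ and $M + c\J = 0$. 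Being semisimple commutative over the algebraically closed field $\C$, $\mathcal{M}'$ is isomorphic to $\C^{k'}$ with $k' = \dim \mathcal{M}'$, and therefore admits a basis $\{E_0, \ldots, E_{k'-1}\}$ of primitive orthogonal idempotents summing to $\J$.

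Each $E_i$ satisfies $E_i \circ E_i = E_i$ entrywise, forcing all entries into $\{0, 1\}$, so each is a $0,1$-matrix; orthogonality under $\circ$ is precisely pairwise disjointness. If $\J \in \mathcal{M}$, then $\mathcal{M}' = \mathcal{M}$ and we are done. Otherwise the linear map $\pi : \mathcal{M}' \to \C$ sending $M + c\J \mapsto c$ is a unital algebra homomorphism (a short direct verification using $\J \circ N = N$ confirms multiplicativity), with $\ker \pi = \mathcal{M}$; since any algebra homomorphism $\C^{k'} \to \C$ is projection onto a single coordinate, $\mathcal{M}$ is spanned by all but one of the $E_i$, yielding the desired basis. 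Symmetry of these basis matrices is automatic because they lie in $\mathcal{M}$, which by hypothesis consists of symmetric matrices.

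The main obstacle is the structure-theoretic step, specifically establishing semisimplicity in a setting where $\mathcal{M}$ need not contain the $\circ$-identity; the unitization trick of working with $\mathcal{M}'$ is the crucial device. An alternative, more elementary route would be to define the equivalence relation $(i,j) \sim (k,l)$ iff $M_{ij} = M_{kl}$ for every $M \in \mathcal{M}$, set aside the ``null class'' of index pairs where every $M \in \mathcal{M}$ vanishes, and then use a generic $M \in \mathcal{M}$ taking distinct nonzero values on each remaining class together with Lagrange-type interpolation by polynomials of zero constant term to realize the characteristic matrix of each class as an element of $\mathcal{M}$.
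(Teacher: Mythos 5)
Your proof is correct, but note that the paper contains no proof of this statement at all --- it is quoted verbatim from Brouwer--Cohen--Neumaier \cite{B:R} --- so the relevant comparison is with the standard argument given there, which is essentially the ``elementary route'' you only sketch in your last sentence: partition the index pairs $(i,j)$ into classes according to the common value pattern of all matrices in $\mathcal{M}$, discard the class on which every matrix vanishes, choose a generic element separating the remaining classes, and interpolate with Hadamard powers (polynomials with zero constant term, so as to stay inside $\mathcal{M}$ without assuming $\J\in\mathcal{M}$) to realize each class's characteristic matrix inside $\mathcal{M}$. Your main argument takes a genuinely different, structure-theoretic route: you unitize to $\mathcal{M}'=\mathcal{M}+\C\,\J$, verify entrywise that $\mathcal{M}'$ has no nonzero nilpotents, conclude $\mathcal{M}'\cong\C^{k'}$ as a commutative semisimple algebra, and take the primitive orthogonal idempotents as the desired basis; the reduction back to $\mathcal{M}$ via the homomorphism $\pi(M+c\J)=c$ correctly disposes of the case $\J\notin\mathcal{M}$, since $\ker\pi$ is spanned by all but one of the idempotents. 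All the individual steps check out (well-definedness of $\pi$ uses $\J\notin\mathcal{M}$, multiplicativity uses closure of $\mathcal{M}$ under $\circ$, and idempotence under $\circ$ forces entries into $\{0,1\}$). What your approach buys is conceptual economy --- the $0,1$-basis is literally the set of primitive idempotents, and its uniqueness comes for free --- at the price of invoking the Wedderburn structure theorem and algebraic closedness of $\C$; the interpolation argument of \cite{B:R} is longer but entirely elementary and self-contained. Either way the symmetry hypothesis plays no role beyond guaranteeing the basis matrices are symmetric, as you observe.
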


 A subalgebra of $\M_n(\C)$ containing $I, \J$, where $I$ is the identity
matrix, is said to be a {\em  coherent algebra}
if it is closed under Hadamard product and conjugate
transposition. For example,  $M_n(\C)$ and $\A(K_n)$, the adjacency
algebra of complete graph $K_n$ are the largest and smallest coherent
algebras respectively. Now we will see an example of a  non-trivial
coherent algebra.

A matrix $A \in \M_n(\C)$  is said to be {\em circulant} if
$a_{ij}=a_{{1,\;j-i+1}\;(mod\;n)}$, whenever $2\le i\le n$ and $1 \le j \le n$.
From the definition, it is clear
that if $A$ is circulant, then for each $i\geq 2$ the elements of
the $i$-th row are obtained by cyclically shifting the elements of
the $(i-1)$-th row one position to the  right. So it is sufficient
to specify its first row.
Let $W_n$ be a
circulant matrix of order $n$ with $[0\; 1\; 0 \dots
0]$ as its first row . Then the following result of Davis~\cite{davis} establishes that every circulant matrix of order $n$ is a polynomial in $W_n$.

\begin{lemma}\cite{davis}\label{lem:cir}
Let $A \in \M_n(\C)$. Then $A$ is circulant if and only if it is a polynomial over $\C$ in $W_n$.
\end{lemma}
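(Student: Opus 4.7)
The plan is to prove both implications by exhibiting $\{I, W_n, W_n^2, \ldots, W_n^{n-1}\}$ as a basis of the space of $n\times n$ circulant matrices. The key preliminary is a concrete description of the powers $W_n^k$.

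First, I would verify by induction on $k$ that $W_n^k$ has entry $1$ in position $(i,j)$ exactly when $j \equiv i+k \pmod{n}$ and $0$ otherwise; in particular its first row has a single $1$ in column $k+1$ for $0\le k\le n-1$, and $W_n^{\,n}=I$. This reduces to tracking how a single cyclic shift composes with itself, which is a direct index computation from the definition of $W_n$.

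For the ``if'' direction, note that $W_n$ is itself circulant by definition, so each $W_n^k$ is circulant from the description above. Since the set of circulant matrices is evidently closed under sums and scalar multiples, every polynomial $p(W_n) = \sum_k c_k W_n^k$ is a $\C$-linear combination of circulant matrices and hence circulant. (No closure under products is needed here since a polynomial is already expanded as a linear combination of powers.)

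For the ``only if'' direction, I would argue that $\{I, W_n, \dots, W_n^{n-1}\}$ is a basis for the space $\Cl_n$ of all $n\times n$ circulant matrices. Linear independence is immediate from the first-row description: the matrices have pairwise disjoint supports in row one. On the other hand, by the defining relation $a_{ij}=a_{1,\,j-i+1\pmod n}$ a circulant is determined by its first row, so $\dim \Cl_n\le n$. Thus these $n$ matrices span $\Cl_n$, and a circulant $A$ with first row $[c_0,c_1,\ldots,c_{n-1}]$ can be written as
\[
A \;=\; \sum_{k=0}^{n-1} c_k W_n^{\,k},
\]
which is a polynomial in $W_n$. The only mildly delicate step is the power computation for $W_n^k$; once that is set up carefully the rest is bookkeeping.
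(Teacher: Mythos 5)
Your proof is correct and complete: the computation of $W_n^k$, the observation that $W_n^n=I$, and the identification of $\{I,W_n,\dots,W_n^{n-1}\}$ as a basis of the $n$-dimensional space of circulants together give both directions cleanly. The paper itself offers no proof of this lemma --- it is quoted from Davis's book on circulant matrices --- and your argument is essentially the standard one found there, so there is no divergence to report.
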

Let $DC_n$ denotes the {\em directed cycle} with $n$ vertices. 
Then it is easy to see that $W_n$ is the adjacency matrix of $DC_n$ and
$\A(DC_n)$ is a coherent algebra of dimension $n$. Further,
$\{W_n^{0},W_n^1,W_n^2,\ldots, W_n^{n-1}\}$  is the unique basis of
$\A(DC_n)$ with mutually disjoint $0,1$-matrices.

Let $X$ be a graph and $A$ be its adjacency matrix. The {\em coherent closure} of $X$, denoted by $\c(X)$, is the smallest coherent
algebra containing  $A$. A graph $X$ is said to be {\em pattern polynomial graph} if
$\A(X)=\c(X)$. For example, distance regular graphs are pattern polynomial
graphs. 
In particular, let $C_n$ be the {\em cycle graph} with $n$ vertices. Then
$A(C_n)=W_n+W_n^{n-1}$, the adjacency matrix of $C_n$. Let $D_i = W_n^i +
W_n^{n-i}$ for  $1\le i < \lfloor \frac{n}{2} \rfloor$.
For $\tau = \lfloor \frac{n}{2} \rfloor$,  \[D_{\tau} =\left\{
\begin{array}{ll}
 W_n^{\tau} &
\mbox{if} \;n\; is\; even,\\
W_n^{\tau} + W_n^{n-\tau}, & \mbox{if} \;n\; is\; odd.
\end{array}\right.
\]

The identity
$$(x^k+x^{-k})=(x+x^{-1}) (x^{k-1}+x^{1-k})-(x^{k-2}+x^{2-k})$$
enables us to establish readily by mathematical induction that
$x^k+x^{-k}$ is a monic polynomial in $x+x^{-1}$ of degree $k$ with integral
coefficients.
Consequently, $D_i$'s for $1\le i \le \tau$ are polynomials of degree
$\leq i$ in $D_1=A(C_n)$ over $\C$.
Hence $\{D_0=I,D_1,\ldots, D_{\tau}\}$ is the unique basis for $\A(C_n)$ with mutually disjoint $0,1$-matrices. Thus from Theorem~\ref{thm:sym}, $C_n$ is a pattern polynomial graph. For more examples of  pattern polynomial graphs refer~\cite{SS}.

 For a fixed positive integer $n$, let $\Z_n$ denote the set of integers
 modulo $n$. It is well known that
$\Z_n$ forms a cyclic group with respect to addition modulo $n$ and
$U_n = \{ k : 1 \le k \le n, \gcd(k,n) = 1\} \subset \Z_n$  forms a
group with respect to multiplication modulo $n$. Also, for any set $S$, if  $|S|$ denotes the cardinality of
the set $S$, then $|U_n| = \varphi(n)$, the famous {\em
Euler-totient function}. Let $\zeta_n\in\C$
denote a {\em primitive $n$-th root of unity}, {\it i.e.}, $(\zeta_n)^n=1$
and $(\zeta_n)^k\neq 1$ for any $k = 1, 2, \ldots, n-1$. Then it is
well known that the multiplicative group generated by $\zeta_n$ is
isomorphic to the additive group $\Z_n$ and the set $\{(\zeta_n)^k:
k \in U_n\}$ is the collection of all primitive $n$-th roots of
unity. 

For any divisor $d$ of $n$, let $X_d^n$ (or in short $X_d$, when the
positive integer $n$ is clear from the context) denote the Cayley
graph ${\mbox{Cay}}(\Z_n,U_d)$ that consists of the elements of
$\Z_n$ as vertices and two vertices $x,y \in \Z_n$ are adjacent (or
$\{x,y\}$ is an edge) in $X_d^n$  if $x-y(mod\; n) \in U_d$. A graph
is called  {\em circulant} if its adjacency matrix is a circulant matrix.
Since $\Z_n$ is a cyclic group, for each divisor $d$ of $n$ the Cayley graph $X_d$ is a
circulant graph. The graph $X_n$ is commonly known as the
{\em unitary Cayley graph}.

In the remaining part of this section, we provide a few results, required for
this paper. In the Section~\ref{sec:two}, we give  few properties of unitary
Cayley graphs which are derived from their eigenvalues. In Section~\ref{sec:main},
we show that every unitary Cayley graph is a  pattern polynomial graph. We also
found the values of $n$ for which  $X_n$ is a distance regular and strongly
regular graphs.

Recall that a positive integer is said to be {\em square free} if its decomposition into prime
numbers does not have any repeated factors. Let $\ga(n)=\prod\limits_{p|n}
p$ be the
square free part of $n$, where $p$ is a prime number.
\begin{lemma}[\cite{H:W}]\label{lem:mu}
Let $n$ be a positive integer. Then $\sum\limits_{k \in U_n} \ze_n^k =
\mu(n)$, where \\
\[\mu(n)=\left\{
\begin{array}{ll}
 0, &  \mbox{ if } \; n \; \mbox{ is not square free,} \\
 1, &  \mbox{ if } \; n \; \mbox{ has even number of prime factors,}\\
-1, & \mbox{ if } \; n \; \mbox{ has odd number of prime factors.}
\end{array}\right.
\]
\end{lemma}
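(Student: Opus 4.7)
The plan is to prove this classical number-theoretic identity by Möbius inversion applied to the full sum of $n$-th roots of unity. Define $f(n) := \sum_{k \in U_n} \ze_n^k$ for every positive integer $n$; the goal is to show $f(n) = \mu(n)$.

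First, I would partition the complete sum $\sum_{k=0}^{n-1} \ze_n^k$ by the multiplicative order of each summand. For each $k \in \{0, 1, \ldots, n-1\}$, the power $\ze_n^k$ has order $d = n/\gcd(k,n)$, which is a divisor of $n$, and $\ze_n^k = \ze_d^{j}$ where $j = k/(n/d) \in U_d$. Writing each $k$ uniquely as $k = (n/d) j$ with $d \mid n$ and $j \in U_d$ yields the rearrangement
$$\sum_{k=0}^{n-1} \ze_n^k \;=\; \sum_{d \mid n}\, \sum_{j \in U_d} \ze_d^j \;=\; \sum_{d \mid n} f(d).$$
The left-hand side is evaluated by the geometric series: it equals $1$ when $n = 1$ and $0$ when $n \geq 2$. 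Hence $\sum_{d \mid n} f(d) = [n = 1]$, where $[\,\cdot\,]$ denotes the Iverson bracket.

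Applying the Möbius inversion formula to the relation $F(n) := \sum_{d \mid n} f(d) = [n=1]$ gives
$$f(n) \;=\; \sum_{d \mid n} \mu(n/d)\, F(d) \;=\; \sum_{d \mid n} \mu(n/d)\,[d = 1] \;=\; \mu(n),$$
which is the desired identity.

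The only delicate step is the first one: verifying that the map $k \mapsto (d, j) = (n/\gcd(k,n),\, k/(n/d))$ is a bijection between $\{0, \ldots, n-1\}$ and $\{(d,j) : d \mid n,\, j \in U_d\}$, and that under this bijection $\ze_n^k$ really equals $\ze_d^j$ (where $\ze_d$ may be taken to be $\ze_n^{n/d}$, a primitive $d$-th root of unity). Once this bookkeeping is in place, the remainder of the argument is a direct invocation of the standard Möbius inversion formula, so no further obstacle arises.
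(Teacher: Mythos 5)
Your proof is correct. The paper itself offers no argument for this lemma --- it is quoted as a known result from Hardy and Wright --- so there is nothing internal to compare against; your M\"obius-inversion derivation is the standard textbook proof of exactly this identity. The two points worth confirming both check out. First, there is no circularity in invoking M\"obius inversion to prove a statement about $\mu$: the inversion formula rests only on the identity $\sum_{d\mid n}\mu(d)=[n=1]$, which follows directly from the definition of $\mu$ (expand over subsets of the prime divisors to get $(1-1)^r$), independently of the lemma being proved. Second, the bijection $k\mapsto(d,j)$ you flag as the delicate step is genuinely fine, with one cosmetic wrinkle: with the paper's convention $U_d=\{k:1\le k\le d,\ \gcd(k,d)=1\}$, the term $k=0$ (equivalently $k=n$) corresponds to $d=1$, $j=1$ rather than $j=0$; since $\ze_1^0=\ze_1^1=1$ this changes nothing, but it is cleaner to let $k$ range over $1,\dots,n$ so that $k=(n/d)j$ with $j\in U_d$ holds literally. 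With that adjustment the argument is complete.
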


Now we will give two results on Ramanujan's sum.

\begin{definition}
For any positive integer $n$ and non-negative integer $m$, the
{\em Ramanujan's sum},  is defined as $c_n(m) =
\sum\limits_{k \in U_n} (\ze_n^k)^m.$singular/non-singular
\end{definition}
For example $c_n(0)=\varphi(n)$ and from Lemma~\ref{lem:mu} we have 
$c_n(1)=\mu(n)$.
\begin{lemma}\cite{T:A,  mor, mot}\label{lem:ramanujan}
Fix positive integers $m$ and $n$ and let $\mu(n)$ and $c_n(m)$ be as
defined earlier. Then
\begin{enumerate}
 \item for each divisor   $d$ of $n$,
$c_n(d)=\mu(\frac{n}{d})\frac{\varphi(n)}{\varphi(\frac{n}{d})}$.
Furthermore $c_n(m)=c_n(d)$ for all $m$ for which $\gcd(m,n)=d$.
\item \label{lem:ramanujan:int} $c_n(m)=\sum_{d|\gcd(n,m)}\mu(n/d)d$. In
particular we have $c_n(m)\in \Z$.
\end{enumerate}
\end{lemma}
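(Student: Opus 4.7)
My plan is to establish part (2) first via a short Möbius inversion, and then to read off both claims of part (1) as consequences.

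For part (2), I would begin from the auxiliary identity
\[
\sum_{d \mid n} c_d(m) \;=\; \sum_{k=1}^{n} \ze_n^{km},
\]
which follows by grouping each $k \in \{1,\dots,n\}$ according to $e = \gcd(k,n)$: writing $k = e\ell$ with $\ell \in U_{n/e}$, the inner sum over $\ell$ equals $c_{n/e}(m)$, and as $e$ ranges over divisors of $n$ so does $n/e$. The right-hand side is the standard geometric sum, which equals $n$ if $n \mid m$ and $0$ otherwise. Möbius inversion then yields
\[
c_n(m) \;=\; \sum_{d \mid n} \mu(n/d)\, d \,[d \mid m] \;=\; \sum_{d \mid \gcd(n,m)} \mu(n/d)\, d,
\]
which is exactly part (2); integrality of $c_n(m)$ is immediate from the formula.

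Now part (1) drops out. The ``furthermore'' clause is immediate from part (2), since the sum $\sum_{d \mid \gcd(n,m)} \mu(n/d)\, d$ depends on $m$ only through $\gcd(n,m)$, so $c_n(m) = c_n(d)$ whenever $\gcd(m,n) = d$. To derive the closed form $c_n(d) = \mu(n/d)\,\varphi(n)/\varphi(n/d)$, I would argue by multiplicativity: for $n = n_1 n_2$ with $\gcd(n_1,n_2)=1$, the factorization $\gcd(n,m) = \gcd(n_1,m)\gcd(n_2,m)$ together with multiplicativity of $\mu$ lets the divisor sum in part (2) split as $c_n(m) = c_{n_1}(m)\, c_{n_2}(m)$; the right-hand side $\mu(n/d)\,\varphi(n)/\varphi(n/d)$ is multiplicative by inspection. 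It therefore suffices to verify equality on a prime power $n = p^a$ with $d = p^b$, $0 \le b \le a$. Since $\mu(p^j)=0$ for $j \ge 2$, the Möbius sum collapses to at most the two terms $j \in \{b,b-1\}$, and a three-case check ($b \le a-2$, $b = a-1$, $b = a$) matches the result against $\mu(p^{a-b})\,\varphi(p^a)/\varphi(p^{a-b})$.

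The main obstacle is just bookkeeping rather than anything conceptual: keeping the index swap $d \leftrightarrow n/d$ in Möbius inversion consistent, and setting up multiplicativity in $n$ correctly so that both the Möbius weights and the divisor-of-$\gcd$ constraint factor across coprime parts of $n$. Once multiplicativity is in hand, the prime-power verification is a short case analysis driven entirely by the vanishing $\mu(p^j) = 0$ for $j \ge 2$.
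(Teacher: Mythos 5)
Your proof is correct, and since the paper states this lemma only by citation to \cite{T:A, mor, mot} with no proof of its own, there is nothing internal to compare against; your argument (the divisor-sum identity $\sum_{d\mid n}c_d(m)=n[n\mid m]$ plus M\"obius inversion for part (2), then multiplicativity and a prime-power check for H\"older's formula in part (1)) is essentially the standard treatment found in those references. All the steps check out, including the collapse of the M\"obius sum at $n=p^a$ and the observation that part (2) immediately yields the ``furthermore'' clause.
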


Now we consider a polynomial [Motose~\cite{mot}, Laszlo Toth~\cite{tot}] with coefficients as Ramanujan's  sums, namely, $R_n (x) =
c_n(0) + c_n(1)x + c_n(2)x^2 + \dots + c_n(n-1)x^{n-1}$.
Then the following theorem due to Laszlo Toth~\cite{tot}
provides a few properties of the polynomial  $R_n(x)$.

\begin{theorem} \cite{tot}\label{thm:gam}
 Let $n\geq 1$.
\begin{enumerate}
 \item The number of non-zero coefficients of $R_n(x)$ is $\ga(n)$ and the
degree of $R_n(x)$ is $n-\frac{n}{\ga(n)}$.
\item $R_n(x)$ has coefficients $\pm 1$ if and only if $n$ is square free and in
this case the number of coefficients $\pm 1$ of $R_n(x)$ is $\varphi(n)$ for $n$
is odd and is $2\varphi(n/2)$ for n is even.
\end{enumerate}
\end{theorem}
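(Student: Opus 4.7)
The plan is to reduce everything to part~1 of Lemma~\ref{lem:ramanujan}, which expresses $c_n(m)=\mu(n/d)\varphi(n)/\varphi(n/d)$ in terms of $d=\gcd(m,n)$. Writing $n=\prod_{i=1}^s p_i^{a_i}$ so that $\ga(n)=\prod_i p_i$ and $n/\ga(n)=\prod_i p_i^{a_i-1}$, a direct prime-by-prime check shows that $n/d$ is square-free (equivalently $\mu(n/d)\ne 0$) if and only if $(n/\ga(n))\mid d$; since $d\mid n$ holds automatically, this is the same as $(n/\ga(n))\mid m$.

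Part~(1) then follows immediately. The values $m\in\{0,1,\ldots,n-1\}$ divisible by $n/\ga(n)$ are exactly $0,\,n/\ga(n),\,2n/\ga(n),\ldots,n-n/\ga(n)$, so there are $\ga(n)$ of them and the largest is $n-n/\ga(n)$, giving both the count of non-zero coefficients and the degree.

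For part~(2) I would first massage the formula into a more transparent shape. For any admissible $d$, write $d=(n/\ga(n))k$ with $k\mid\ga(n)$, so that $n/d=\ga(n)/k$. Combining $\varphi(n)=(n/\ga(n))\varphi(\ga(n))$ with $\varphi(\ga(n))=\varphi(k)\varphi(\ga(n)/k)$ (valid because $\gcd(k,\ga(n)/k)=1$ when $\ga(n)$ is square-free) gives
\[
c_n(m)=\mu(\ga(n)/k)\cdot(n/\ga(n))\cdot\varphi(k),
\]
so $|c_n(m)|=(n/\ga(n))\varphi(k)$ for every non-zero coefficient. If $n$ is not square-free then $n/\ga(n)>1$, and no non-zero coefficient can equal $\pm 1$. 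If $n$ is square-free then $n/\ga(n)=1$ and $k=\gcd(m,n)$, so $c_n(m)=\pm 1$ iff $\varphi(\gcd(m,n))=1$ iff $\gcd(m,n)\in\{1,2\}$.

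Finally, I would count the $m\in\{0,\ldots,n-1\}$ satisfying this condition via the standard identity $\#\{m:\gcd(m,n)=d\}=\varphi(n/d)$. For $n$ odd square-free only $d=1$ is admissible, giving $\varphi(n)$ coefficients equal to $\pm1$; for $n=2t$ with $t$ odd square-free both $d=1$ and $d=2$ contribute $\varphi(n)=\varphi(t)$ and $\varphi(n/2)=\varphi(t)$ respectively, totalling $2\varphi(n/2)$. The main obstacle is the bookkeeping in the change of variable $d\leftrightarrow k$; once the factorization $c_n(m)=\pm(n/\ga(n))\varphi(k)$ is in hand, the remaining case analysis is routine.
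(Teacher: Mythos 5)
Your proof is correct. The paper itself gives no proof of this theorem --- it is quoted from L.~T\'oth's paper \cite{tot} --- so there is nothing internal to compare against; but your derivation, reducing everything to the H\"older-type formula $c_n(m)=\mu(n/d)\varphi(n)/\varphi(n/d)$ of Lemma~\ref{lem:ramanujan}, the observation that $\mu(n/d)\neq 0$ iff $(n/\ga(n))\mid m$, the factorization $c_n(m)=\pm(n/\ga(n))\varphi(k)$, and the count $\#\{m:\gcd(m,n)=d\}=\varphi(n/d)$, is exactly the standard route to this result and all steps check out.
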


\section{{\small{Few properties of unitary Cayley graphs from their
eigenvalues}}}\label{sec:two}
In this section, we provide few properties of unitary Cayley graphs using their
eigenvalues. Most of these results are  stated in \cite{Kol, So}. There are two
tables consisting of eigenvalues, minimal polynomial and characteristic
polynomial of adjacency matrix of $X_n$.

Let $A \in M_n(\C)$ be a circulant matrix. Then from  Lemma~\ref{lem:cir}, there exists a 
 unique polynomial $p_A(x) \in \C[x]$ of degree $\leq n-1$ 
such that
$A=p_A(W_n)$. We call $p_A(x)$, the representer polynomial of $A$. Then the
following result
about circulant matrices is well known.

\begin{lemma}\label{lem:singular}
Let $A \in M_n(\C)$ be a circulant matrix with $[a_0\; a_1\dots
a_{n-1}]$ as its first row. Then
 $p_A(x)=
\sum_{i=0}^{n-1}a_{i}x^{i} \in \C[x]$ and the eigenvalues of $A$ are given by
$p_A(\ze_n^k)$ for $ k = 0, 1,
\ldots, n-1$. Further the matrix $A$ is singular
 if and only if $\deg( \gcd( p_A(x), x^n-1 ) ) > 1$.
\end{lemma}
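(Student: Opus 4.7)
The plan is to prove the three assertions in sequence, using the fact that circulant matrices are exactly the polynomials in $W_n$ (Lemma~\ref{lem:cir}) together with the spectral decomposition of $W_n$.

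First, I would identify $p_A(x)$. By Lemma~\ref{lem:cir}, there exists a unique polynomial $q \in \C[x]$ of degree at most $n-1$ with $A = q(W_n)$. Writing $q(x) = \sum_{i=0}^{n-1} b_i x^i$, the key observation is that $W_n^i$ is itself a circulant whose first row has a single $1$ in position $i+1$ (and $0$'s elsewhere), for $0 \le i \le n-1$. Hence the first row of $\sum b_i W_n^i$ is $[b_0\; b_1 \cdots b_{n-1}]$, which must equal $[a_0\; a_1 \cdots a_{n-1}]$. So $b_i = a_i$ for every $i$, giving $p_A(x) = \sum_{i=0}^{n-1} a_i x^i$.

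Next, for the eigenvalues, I would first diagonalize $W_n$. Since $W_n$ is the permutation matrix of the cyclic shift, $W_n^n = I$, so the minimal polynomial of $W_n$ divides $x^n - 1$; checking that no smaller power of $W_n$ equals $I$ shows the minimal polynomial equals $x^n - 1$, which splits into distinct linear factors over $\C$. Hence $W_n$ is diagonalizable with spectrum $\{\ze_n^k : 0 \le k \le n-1\}$. The standard spectral-mapping argument then gives that the eigenvalues of $A = p_A(W_n)$ are exactly $p_A(\ze_n^k)$ for $k = 0, 1, \ldots, n-1$ (with the same eigenvectors, namely the columns of the Fourier matrix).

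Finally, the singularity criterion follows immediately from the eigenvalue description: $A$ is singular iff $\det A = \prod_{k=0}^{n-1} p_A(\ze_n^k) = 0$ iff $p_A(\ze_n^k) = 0$ for some $k$ iff $p_A(x)$ and $x^n - 1$ share a root in $\C$ iff $\gcd(p_A(x), x^n - 1)$ is nonconstant. I do not anticipate a real obstacle here; the only delicate point is verifying that the minimal polynomial of $W_n$ really is $x^n - 1$ (equivalently, that $W_n^k \neq I$ for $1 \le k < n$), which is immediate from the cyclic shift interpretation. The rest is bookkeeping: matching first rows in the first step, and invoking spectral mapping and the product formula for the determinant in the others.
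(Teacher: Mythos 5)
The paper gives no proof of this lemma at all --- it is stated as ``well known'' with an implicit appeal to Davis's book --- so there is nothing internal to compare against; your argument is the standard one and is essentially correct. The identification of $p_A$ via the first rows of the powers $W_n^i$, the diagonalization of $W_n$ by the Fourier matrix, the spectral mapping step, and the determinant-as-product-of-eigenvalues step are all sound. Two small points deserve care. First, your parenthetical claim that the minimal polynomial of $W_n$ equals $x^n-1$ ``equivalently'' because $W_n^k\neq I$ for $1\le k<n$ is not a valid inference in general: a matrix can satisfy $M^n=I$ with $M^k\neq I$ for all $k<n$ and still have minimal polynomial a proper divisor of $x^n-1$ (e.g.\ a diagonal matrix with entries $-1$ and a primitive cube root of unity, for $n=6$). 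For $W_n$ the conclusion is nonetheless true, and is cleanest either from the explicit eigenvectors $(1,\zeta_n^k,\zeta_n^{2k},\ldots,\zeta_n^{(n-1)k})^{T}$ that you already invoke, or from the observation that $I,W_n,\ldots,W_n^{n-1}$ are linearly independent (they are $0,1$-matrices with disjoint supports), so the minimal polynomial has degree $n$. Second, what you actually prove --- correctly --- is that $A$ is singular if and only if $\gcd(p_A(x),x^n-1)$ is nonconstant, i.e.\ has degree $\geq 1$; the lemma as printed says $\deg>1$, which is a typo in the paper (take $n=2$ and first row $[1\;1]$: then $\gcd(1+x,x^2-1)=x+1$ has degree exactly $1$ and $A=\J$ is singular). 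Your version is the right one.
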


Let us denote the adjacency matrix of $X_d$  by $A_d$.
 Then from the definition of $X_d$,  $A_d=\sum_{k\in U_d}W_n^{\frac{nk}{d}}$.
 Hence  its  representer polynomial is  $p_{A_d}(x)=\sum_{k\in
U_d}x^{\frac{nk}{d}}$ . Note the polynomial $p_{A_n}(x)$ is same as
the polynomial $\Psi_n(x)$ defined in the paper L.
Toth~\cite{tot}. Further, he proved that $\Phi_n(x)$
divides $\Psi_n(x)-\mu(n)$. Hence we have the following result.

\begin{lemma}\label{lem:Phi}
Let $n$ be a positive integer. Then $\Phi_n(x)|p_{A_n}(x)-\mu(n)$.
\end{lemma}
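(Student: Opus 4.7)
The plan is to reduce the divisibility to the statement that every primitive $n$-th root of unity is a root of $p_{A_n}(x) - \mu(n)$. Since $\Phi_n(x)$ is, by definition, the monic polynomial in $\Z[x]$ whose roots are precisely the primitive $n$-th roots of unity (each with multiplicity one), it suffices to verify that $p_{A_n}(\omega) = \mu(n)$ for every primitive $n$-th root of unity $\omega$.

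Concretely, I would write $\omega = \zeta_n^j$ for some $j$ with $\gcd(j,n) = 1$, i.e.\ $j \in U_n$. Using the explicit formula $p_{A_n}(x) = \sum_{k \in U_n} x^k$ derived just before the lemma (which comes from $A_n = \sum_{k \in U_n} W_n^k$), I would then compute
\[
p_{A_n}(\omega) \;=\; p_{A_n}(\zeta_n^j) \;=\; \sum_{k \in U_n} \zeta_n^{jk}.
\]

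The next step is the key observation: since $j \in U_n$ and $U_n$ is a group under multiplication modulo $n$, the map $k \mapsto jk \pmod n$ is a bijection from $U_n$ onto itself. Consequently the set $\{jk \pmod n : k \in U_n\}$ is exactly $U_n$, and therefore
\[
\sum_{k \in U_n} \zeta_n^{jk} \;=\; \sum_{k \in U_n} \zeta_n^{k} \;=\; \mu(n),
\]
the last equality being Lemma~\ref{lem:mu}. This shows that every primitive $n$-th root of unity is a root of $p_{A_n}(x) - \mu(n)$, from which the divisibility $\Phi_n(x) \mid p_{A_n}(x) - \mu(n)$ follows.

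I do not expect any genuine obstacle here; the only subtlety worth stating carefully is the justification that $\Phi_n(x)$ divides a polynomial whenever all its primitive $n$-th roots of unity are roots of that polynomial, which uses that $\Phi_n$ has distinct roots and is the minimal polynomial over $\Q$ of any primitive $n$-th root of unity. Everything else is a one-line application of the group structure of $U_n$ combined with Lemma~\ref{lem:mu}.
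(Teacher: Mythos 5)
Your proof is correct, and it is worth noting that it is \emph{more} than what the paper itself provides: the paper does not prove this lemma at all, but merely observes that $p_{A_n}(x)$ coincides with the polynomial $\Psi_n(x)$ of L.~Toth and cites his result that $\Phi_n(x)$ divides $\Psi_n(x)-\mu(n)$. Your argument supplies the missing self-contained proof along the natural lines: for $j\in U_n$ the map $k\mapsto jk \pmod n$ permutes $U_n$, so $p_{A_n}(\zeta_n^j)=\sum_{k\in U_n}\zeta_n^{jk}=\sum_{k\in U_n}\zeta_n^{k}=\mu(n)$ by Lemma~\ref{lem:mu}, and since $\Phi_n(x)$ is monic with simple roots that are exactly the primitive $n$-th roots of unity, the divisibility follows (in $\Q[x]$, hence in $\Z[x]$ by monicity of $\Phi_n$). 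Your key identity is precisely the statement $c_n(j)=c_n(1)=\mu(n)$ for $\gcd(j,n)=1$, i.e.\ the case $d=1$ of Lemma~\ref{lem:ramanujan}(1), so you could equally well have quoted that; either way the argument is sound. The only thing your route costs is a few lines; what it buys is independence from the external reference, which is arguably preferable since the lemma is elementary.
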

If $n$ is not  square  free number then, from Lemma~\ref{lem:mu},
$\mu(n)=0$. Hence from the Lemma~\ref{lem:singular},\ref{lem:Phi}, we have the following result.
And its converse is also true, and which is shown as a part in a  corollary to next theorem.
Recall a graph is said to be singular/non-singular if its adjacency matrix is singular/non-singular.
\begin{cor}
Let $n$ be a positive integer.  Then $X_n$ is non-singular graph if and
only if  $n$  is square free.
\end{cor}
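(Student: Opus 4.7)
The statement is a biconditional, and the excerpt has already effectively proved one direction: if $n$ is not square free then $\mu(n)=0$ by Lemma~\ref{lem:mu}, so Lemma~\ref{lem:Phi} gives $\Phi_n(x)\mid p_{A_n}(x)$, and since $\Phi_n(x)\mid x^n-1$ with $\deg\Phi_n=\varphi(n)\ge 2$ for every non-square-free $n$, Lemma~\ref{lem:singular} yields singularity. My plan is therefore to supply the remaining implication: if $n$ is square free, then $A_n$ is non-singular.

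I would approach this by computing the spectrum of $A_n$ explicitly via Ramanujan's sum. From $A_n=\sum_{k\in U_n}W_n^k$ the representer polynomial is $p_{A_n}(x)=\sum_{k\in U_n}x^k$, and Lemma~\ref{lem:singular} identifies the eigenvalues of $A_n$ with the values
\[
p_{A_n}(\zeta_n^j)=\sum_{k\in U_n}(\zeta_n^j)^k=c_n(j),\qquad 0\le j\le n-1.
\]
By Lemma~\ref{lem:ramanujan}(1), $c_n(j)$ depends only on $d=\gcd(j,n)$ and equals $\mu(n/d)\,\varphi(n)/\varphi(n/d)$ for each divisor $d$ of $n$. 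So the set of eigenvalues of $A_n$ is exactly $\{\mu(n/d)\,\varphi(n)/\varphi(n/d):d\mid n\}$.

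The conclusion is then immediate: assuming $n$ is square free, every divisor $d$ of $n$ makes the quotient $n/d$ square free as well, hence $\mu(n/d)\in\{+1,-1\}$. Since the factor $\varphi(n)/\varphi(n/d)$ is strictly positive, no eigenvalue of $A_n$ can vanish, and $A_n$ is non-singular.

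I do not anticipate a real obstacle here: Lemmas~\ref{lem:singular} and~\ref{lem:ramanujan} together do essentially all of the work. The only bookkeeping point worth flagging is that square-freeness is inherited by divisors (so that $n/d$ remains square free for every $d\mid n$), which is precisely what keeps $\mu(n/d)$ nonzero across the entire spectrum.
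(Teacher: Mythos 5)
Your proposal is correct and follows essentially the same route as the paper: the forward direction is exactly the paper's argument via $\mu(n)=0$, Lemma~\ref{lem:Phi} and Lemma~\ref{lem:singular}, and your converse reproduces the paper's deferred argument (Theorem~\ref{thm:eig} together with Lemma~\ref{lem:ramanujan}), namely that the eigenvalues are the Ramanujan sums $c_n(j)=\mu(n/d)\varphi(n)/\varphi(n/d)$, which are all nonzero when $n$ is square free.
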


\begin{theorem}\cite{Kol, So}\label{thm:eig}
Let  $\lambda_i\;(0\leq i\leq n-1)$ are
the eigenvalues of $X_n$. Then $\la_i=c_n(i)\;(0\leq i\leq n-1)$.
\end{theorem}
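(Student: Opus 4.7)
The statement is a direct consequence of the machinery already assembled in the excerpt, so my plan is essentially to chain the right facts together rather than to introduce anything new.

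First, I would recall from the discussion just before the theorem that the unitary Cayley graph $X_n$ is a circulant graph, and that its adjacency matrix $A_n$ has representer polynomial
\[
p_{A_n}(x) \;=\; \sum_{k \in U_n} x^{k},
\]
obtained as the specialisation $d=n$ of the formula $A_d=\sum_{k\in U_d}W_n^{nk/d}$ established earlier.

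Next, I would invoke Lemma~\ref{lem:singular}, which says that for any circulant $n\times n$ matrix the eigenvalues are precisely $p_{A_n}(\zeta_n^i)$ for $i=0,1,\ldots,n-1$. Plugging the representer polynomial above into this formula gives
\[
\lambda_i \;=\; p_{A_n}(\zeta_n^i) \;=\; \sum_{k \in U_n} \zeta_n^{ki} \;=\; \sum_{k \in U_n} (\zeta_n^k)^{i}.
\]
The right-hand side is exactly the definition of the Ramanujan sum $c_n(i)$ given just after Lemma~\ref{lem:mu}, so $\lambda_i=c_n(i)$ as claimed.

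There is no real obstacle here: once Lemma~\ref{lem:singular} is in hand and the representer polynomial of $A_n$ has been identified with the generating polynomial of the Ramanujan sums, the identification of eigenvalues is immediate. The only point that deserves a word of caution in the write-up is the indexing convention, namely that the index $i$ in $\lambda_i$ corresponds to evaluation at $\zeta_n^i$ rather than at $\zeta_n^{-i}$; since $U_n$ is closed under negation modulo $n$ whenever $n>1$, the two conventions agree and this is harmless.
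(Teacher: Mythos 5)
Your proof is correct: the paper itself states this theorem with only a citation to Klotz--Sander and So and gives no proof, and your argument is exactly the standard one that the paper's own machinery (the representer polynomial $p_{A_n}(x)=\sum_{k\in U_n}x^k$ together with Lemma~\ref{lem:singular}) is set up to deliver. The closing remark about indexing is harmless but unnecessary, since Lemma~\ref{lem:singular} already fixes the convention $\lambda_i=p_{A_n}(\zeta_n^i)$ and the computation $p_{A_n}(\zeta_n^i)=\sum_{k\in U_n}(\zeta_n^k)^i=c_n(i)$ needs no symmetry of $U_n$.
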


By fixing the following notations, we will see few applications of  above
theorem. Let $n=2^{c_0}p_1^{c_1}p_2^{c_2}\dots p_r^{c_r}$ where
$p_1<p_2\dots<p_r$ are distinct odd primes. \\Let us
denote $D^*=\{a_{i_1}a_{i_2}\cdots a_{i_t}|i_1,i_2,\ldots,i_t\in\{1,2,\ldots,
r\}\}$ where $a_j=p_j-1\;for \;1\leq
j\leq r$.  Note for  each element
$b\in D^*$ there is a number $t \;(1\leq t\leq r)$ associated with
it. By using these notations and the  results  Lemma~\ref{lem:ramanujan},
Theorem~\ref{thm:gam} and Theorem~\ref{thm:eig} we have the following tables. Recall, spectrum of a graph $X$ is denoted by $\sigma(X)=\left( \begin{array}{ccc}
      \la_1 &\dots & \la_k\\
       m_1 & \dots &m_k
     \end{array} \right),$ where $\la_1,\ldots,\la_k$ are distinct eigenvalues of $X$ and $m_1,\ldots,m_k$ are their corresponding multiplicities respectively.
\pagebreak
\begin{table}[!h]
\caption{Spectrum of unitary Cayley Graphs}
\centering
\begin{center}
\begin{tabular}{|l|l|}
\hline
$n$ & $\sigma(X_n)$(Spectrum of the graph) \\
\hline
 $p$,  $p$ is prime &
 $\left( \begin{array}{cc}
      -1 & p-1\\
       p-1 & 1
     \end{array} \right)$ \\
\hline
$p^k$,  $p$ is prime and $k>1$ & $\left( \begin{array}{ccc}
      -p^{k-1} & 0 & (p-1)p^{k-1}\\
      p-1 & p^k-p & 1
     \end{array} \right)$ \\
\hline
$2p$,  $p$ is prime & $
    \left( \begin{array}{cccc}
      -(p-1) &-1  &1 & p-1\\
      1 & p-1 & p-1 & 1
     \end{array} \right)$\\
\hline
 $pq$, where $p$ and $q$ are odd primes &

 $\left( \begin{array}{cccc}
      1 & -(p-1) &-(q-1) & \varphi(pq)\\
      \varphi(pq) & q-1 & p-1 & 1
     \end{array} \right)$\\
     \hline
  square free even number  &

 $\left( \begin{array}{cccc}
      -1 & 1 & b & -b\\
      \varphi(n) & \varphi(n) & \varphi(n)/b &\varphi(n)/b
     \end{array} \right)$ $\forall b\in D^*$\\
     \hline
 square free odd number  &
$\left( \begin{array}{cc}
      (-1)^r & [(-1)^{r+t} b]\\
      \varphi(n) & \varphi(n)/b
    \end{array} \right)$ $\forall b\in D^*$\\
     \hline
 even but not square free  &
 $\left( \begin{array}{ccccc}
      0 & -n/\ga(n) & n/\ga(n) & (n/\ga(n))b & -(n/\ga(n))b\\
      n-\ga(n)& \varphi(n) & \varphi(n) & \varphi(n)/b &\varphi(n)/b
     \end{array} \right)$ $\forall b\in D^*$\\
     \hline
 odd but not square free  &
$\left( \begin{array}{ccc}
      0 &(-1)^r(n/\ga(n)) & [(-1)^{r+t} b](n/\ga(n))\\
      n-\ga(n)& \varphi(n) & \varphi(n)/b
    \end{array} \right)$ $\forall b\in D^*$\\
\hline
\end{tabular}
\end{center}
\label{tab:eig}
\end{table}

\begin{table}[!h]
\caption{Characteristic and minimal polynomials of unitary Cayley graphs}
\centering
\begin{tabular}{|l|l|l|}
\hline
n & Minimal polynomial  & Characteristic polynomial \\
\hline
$p$,  $p$ is prime & $(x+1)(x-(p-1))$ & $(x+1)^{p-1}(x-(p-1))$ \\
\hline
$p^k$  $p$ is prime and $k>1$ & $x(x-(p-1)p^{k-1})(x+p^{k-1})$ &
$x^{p^k-p}(x-(p-1)p^{k-1})(x+p^{k-1})^{p-1}$  \\
\hline
square free and even & $(x^2-1)\prod\limits_{b\in D^*}(x^2-b^2)$ &
$(x^2-1)^{\varphi(n)}\prod\limits_{b\in D^*}(x^2-b^2)^{\varphi(n)/b}$\\
\hline
square free and odd  & $(x-(-1)^r)\prod\limits_{b\in D^*}(x-(-1)^{r+t}b)$
&$(x-(-1)^r)^{\varphi(n)}\prod\limits_{b\in
D^*}(x-(-1)^{r+t}b)^{\varphi(n)/b}$\\
\hline
not square free,  & $x(x^2-(n/\ga(n))^2)\cdot$ &
$x^{n-\ga(n)}(x^2-(n/\ga(n))^2)\cdot$ \\
even and $r>1$ & $\prod\limits_{b\in D^*}(x^2-([n/\ga(n)]b)^2)$ &
$\prod\limits_{b\in
D^*}(x^2-([n/\ga(n)]b)^2)^{\varphi(\ga(n))/b}$\\
\hline
not square free,  & $x(x-(-1)^r(n/\ga(n)))\cdot$ &
$x^{n-\ga(n)}(x^2-(n/\ga(n))^2)\cdot$ \\
odd and $r>1$ & $\prod\limits_{b\in D^*}(x-(-1)^{r+t}([n/\ga(n)]b))$ &
$\prod\limits_{b\in
D^*}(x-(-1)^{r+t}([n/\ga(n)]b))^{\varphi(\ga(n))/b}$\\
\hline

\end{tabular}
\label{tab:min}
\end{table}
As a consequence of  Theorems~\ref{thm:gam}, \ref{thm:eig},
Lemma~\ref{lem:ramanujan} and above tables, we have the following results. One
can also refer the book by Dragos M. Cvetkovic, Michael Doob $\&$ Horst
Sachs~\cite{C:D:S} for further clarification.

\begin{cor}\label{cor:eigen}
Let $n$ be a positive integer.
\begin{enumerate}
\item $X_n$ is non-singular if and only if $n$ is square free.

\item \cite{Kol}
 a)The number of non-zero eigenvalues of $X_n$ are $\ga(n)$
 or the nullity (Dimension of the null space of $A_n$) of $X_n$ is $n-\ga(n)$.\\
 b)$X_n$ is an integral graph for every $n$. Further every non-zero eigenvalue of $X_n$ is a divisor of $\varphi(n)$.\\
c)If $n$ is not square free, then none of the eigenvalues of $X_n$ is $1$ or
$-1$.
 \item Let $\tau(n)$ is the number of positive divisors of $n$. Then the degree of minimal polynomial of $X_n$
 is \[\left\{
\begin{array}{l l} 
\tau(\ga(n))+1, & \mbox{ if } \; n \; \mbox{is not square free,} \\
\tau(n), & \mbox{ if } \; n \; \mbox{is square free.}
\end{array}\right.
\]

\item The \[
\det(A(X_n))=\left\{
\begin{array}{l l}          
 0,             & \mbox{ if } \; n \; \mbox{is not square free,} \\
-1,             & \mbox{if n=2,}\\
 p-1,           &  \mbox{ if } \; n=p \; \mbox{is an odd prime,}\\
-(p-1)^2,       &  \mbox{ if } \; n=2p \; \mbox{where p is an odd prime,}\\
(p-1)^q(q-1)^p, &  \mbox{ if } \; n=pq \; \mbox{ where p ans q are odd
primes,}\\
(-1)^r\prod\limits_{b\in D^*}[(-1)^tb]^{\varphi(n)/b}, & {\mbox{ if }} \;
n \;
{\mbox{ is a square free odd number,}}\\
\prod\limits_{b\in D^*}(-1)^{\varphi(n)/b}b^{(2\varphi(n))/b}, & \mbox{ if
} \; n \;
\mbox{ is a square free even number.}
\end{array}\right.
\]
\end{enumerate}
\end{cor}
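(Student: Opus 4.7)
The corollary bundles six assertions (labeled (1), (2a)--(2c), (3), (4)), each of which will follow by reading off Theorem~\ref{thm:eig}, which identifies the spectrum of $X_n$ with Ramanujan's sums $c_n(0),\ldots,c_n(n-1)$, together with the closed form for $c_n(m)$ in Lemma~\ref{lem:ramanujan} and the information on the polynomial $R_n(x)$ from Theorem~\ref{thm:gam}. My plan is to organize the proof as a sequence of short deductions from these three inputs, in the order (1), (2a), (2b), (2c), (3), (4).

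For (1), I would first observe that $c_n(i)=c_n(d)$ with $d=\gcd(i,n)$, and that $c_n(d)=\mu(n/d)\varphi(n)/\varphi(n/d)$ vanishes precisely when $n/d$ fails to be square free. If $n$ is square free, every $n/d$ is square free and no eigenvalue is zero; if $n$ is not square free, taking $d=\ga(n)$ gives $n/d=n/\ga(n)$, which has a repeated prime factor, producing a zero eigenvalue. For (2a), the eigenvalues of $X_n$ are literally the coefficients of $R_n(x)$, so the count of $\ga(n)$ non-zero coefficients from Theorem~\ref{thm:gam} transfers at once, and the nullity equals $n-\ga(n)$. For (2b), Lemma~\ref{lem:ramanujan}(\ref{lem:ramanujan:int}) gives integrality, and every non-zero eigenvalue has the form $\pm\varphi(n)/\varphi(n/d)$ with $d\mid n$; since $n/d\mid n$ implies $\varphi(n/d)\mid\varphi(n)$ (standard via multiplicativity and $\varphi(p^a)=p^{a-1}(p-1)$), this ratio is an integer divisor of $\varphi(n)$. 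For (2c), I would assume $c_n(m)=\pm 1$, deduce that $n/d$ is square free and $\varphi(n)=\varphi(n/d)$, and then show that writing $n=\prod p_i^{a_i}$ forces every $a_i=1$: otherwise a factor $p_i^{a_i-1}\geq 2$ appears in $\varphi(n)$ with no counterpart in $\varphi(n/d)$.

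For (3), the degree of the minimal polynomial equals the number of distinct eigenvalues. When $n$ is square free, I would read from Table~\ref{tab:eig} that the values $c_n(d)$, $d\mid n$, are pairwise distinct (indexed by sign and subset of $\{p_1-1,\ldots,p_r-1\}$), giving $2^r=\tau(n)$ values. When $n$ is not square free, the non-zero eigenvalues are the eigenvalues of $X_{\ga(n)}$ scaled by $n/\ga(n)$, yielding $\tau(\ga(n))$ distinct non-zero values; adjoining the zero eigenvalue gives $\tau(\ga(n))+1$. For (4), the determinant is the product of eigenvalues with multiplicities: the non-square-free cases give $\det=0$ by (2a), and the remaining cases reduce to direct multiplication from the appropriate row of Table~\ref{tab:eig}, using that $\varphi(n)$ is even for $n>2$ to simplify the resulting sign factors.

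The hard part will be bookkeeping rather than any new mathematics: one must verify carefully that the parametrization of non-zero eigenvalues really yields exactly $\tau(\ga(n))$ \emph{distinct} values (needed to reconcile (2a) with (3)), and then carry out the sign-and-exponent computation in (4) consistently across all six sub-cases of the determinant formula.
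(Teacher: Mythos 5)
Your overall route --- reading everything off Theorem~\ref{thm:eig}, Lemma~\ref{lem:ramanujan} and Theorem~\ref{thm:gam} together with Table~\ref{tab:eig} --- is exactly the paper's, which offers no argument beyond that citation. Two concrete problems, though. First, in part (1) your witness for singularity is wrong: for $n$ not square free you take $d=\ga(n)$ and claim $n/\ga(n)$ has a repeated prime factor, but this is false in general. For $n=12$ one has $n/\ga(n)=2$, which is square free, and indeed $c_{12}(6)=\mu(2)\varphi(12)/\varphi(2)=-4\neq 0$. The correct witness is $d=1$: the eigenvalue $c_n(1)=\mu(n)$ vanishes precisely when $n$ is not square free, which also gives the converse direction in one line.

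Second, and more seriously, the distinctness verification you defer in part (3) cannot actually be carried out: the values $c_n(d)=\mu(n/d)\varphi(n)/\varphi(n/d)$ for $d\mid n$ need not be pairwise distinct even when $n$ is square free. For $n=3\cdot 5\cdot 7\cdot 13=1365$ one gets $c_n(35)=\mu(39)\cdot\varphi(1365)/\varphi(39)=24$ and $c_n(39)=\mu(35)\cdot\varphi(1365)/\varphi(35)=24$, because $(5-1)(7-1)=(3-1)(13-1)$ and both divisors carry two prime factors, so the $\mu$-signs agree as well. Hence $X_{1365}$ has at most $15<\tau(1365)=16$ distinct eigenvalues, the minimal-polynomial count in part (3) fails there, and the implicit injectivity of the parametrization by $b\in D^*$ in Table~\ref{tab:eig} fails with it. You were right to single this step out as the delicate one, but it is not mere bookkeeping: it is a genuine obstruction, and any proof of (3) as stated must either exclude such $n$ or add a hypothesis forcing the products $\prod(p_i-1)$ over subsets of the prime divisors to be distinct. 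The remaining parts (2a)--(2c) and (4) are fine as you outline them.
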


The following result gives the graph theoretic properties of $X_n$.

\begin{cor}\label{lem:graph}
Let $n$ be a positive integer.
\begin{enumerate}
\item \cite{Kol} $X_n$ bipartite graph if and only if  $n$  even number. Further
$X_n$ is  complete bipartite graph if and only if $n=2^k$ for some $k\geq 1$.\\
$X_n$ is complete graph if and only if $n$ is prime.

\item $X_n$ is strongly regular graph if and only if $n$ is a prime power.

\item $X_n$ is crown graph (the complete bipartite graph minus 1-factor) if and
only if $n=2p$ where $p$ is an odd prime.
\end{enumerate}
\end{cor}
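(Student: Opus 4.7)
For part (1), my first step is the bipartite characterization via parity of the connection set. If $n$ is even, then every $k\in U_n$ must be odd (any even $k$ would share the factor $2$ with $n$), so the bipartition of $\Z_n$ into even and odd residues witnesses bipartiteness. Conversely, if $n$ is odd then $1\in U_n$ gives the Hamilton cycle $0\to 1\to 2\to \cdots\to n-1\to 0$ of odd length $n$, forbidding bipartiteness. For the complete bipartite claim, when $n=2^k$ every odd residue lies in $U_n$, so the bipartite graph above has every possible edge. Conversely, if $X_n$ is complete bipartite then $\varphi(n)=n/2$, and the product formula $\varphi(n)/n=\prod_{p\mid n}(1-1/p)$ forces $2$ to be the only prime divisor of $n$. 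Finally, $X_n=K_n$ iff $U_n=\Z_n\setminus\{0\}$, which is exactly the statement that $n$ is prime.

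For part (2), I would appeal directly to Lemma~\ref{lem:strong}: since $X_n$ is connected ($1\in U_n$ generates $\Z_n$) and regular, strong regularity is equivalent to having exactly three distinct eigenvalues. From Table~\ref{tab:eig} this is precisely the prime-power case: for $n=p^k$ with $k\ge 2$ the spectrum consists of the three values $-p^{k-1},\; 0,\; (p-1)p^{k-1}$, whereas every other row of the table (square-free with at least two prime factors, or not square free with $r\ge 1$) lists at least four distinct eigenvalues. The case $n=p$ prime yields $K_p$, which has only two eigenvalues and whose status under the paper's diameter-$2$ convention should be acknowledged as a boundary case.

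For part (3), the forward direction is a direct structural computation. For $n=2p$ with $p$ an odd prime, $U_{2p}$ equals the set of odd residues coprime to $p$, i.e., all odd elements of $\{1,\ldots,2p-1\}$ except $p$. Hence $x\sim y$ in $X_{2p}$ iff $x-y\pmod{2p}$ is odd and not equal to $p$, so the edges are exactly the pairs of opposite parity minus the perfect matching $\{x,x+p\}$; that is the crown graph on $2p$ vertices. For the converse, if $X_n$ is a crown graph on $2m$ vertices then $n=2m$ and $(m-1)$-regularity forces $\varphi(n)=n/2-1$. Writing $n=2^a t$ with $t$ odd and $a\ge 1$ (from bipartiteness in (1)), $\varphi(n)=2^{a-1}\varphi(t)$ and the equation rearranges to $2^{a-1}(t-\varphi(t))=1$, which forces $a=1$ and $t-\varphi(t)=1$, so $t=p$ must be an odd prime and $n=2p$.

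The only real obstacle is staying careful with boundary cases (especially $K_p$ in part (2), and very small $n$ throughout) and executing the short number-theoretic manipulations cleanly; none of the steps is deep, as they all reduce to spectrum inspection from Table~\ref{tab:eig} together with the multiplicative structure of $\varphi$.
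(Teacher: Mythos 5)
Your proof is correct and follows essentially the route the paper intends: the paper offers no explicit proof, presenting the corollary as a consequence of Table~\ref{tab:eig}, Lemma~\ref{lem:strong}, and the structure of $U_n$, which is exactly what you use (spectrum inspection for part (2), direct analysis of the connection set and the multiplicativity of $\varphi$ for parts (1) and (3)). Your flag on the $K_p$ boundary case in part (2) is well taken, since the paper's own definition of strongly regular (distance regular of diameter $2$) excludes complete graphs while the corollary's ``prime power'' includes primes; this is a wrinkle in the paper's statement, not in your argument.
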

\section{$\A(X_n)=\c(X_n)$}\label{sec:main}

In this section we will find the values of $n$ such that  $X_n$ is a distance
regular graph and we will show that $\A(X_n)=\c(X_n)\;for\;all\; n$.
The following result can be obtained from the main result (Theorem~1.2) of $\check{S}$tefko
Miklavi$\check{c}$ and Primo$\check{z}$ Poto$\check{c}$nik~\cite{ste} and from
the Corollary~\ref{lem:graph}.
\begin{theorem}
Let $X_n$ be a unitary Cayley graph. Then $X_n$ is distance regular graph if and
only if  $n$ is a prime power  or $n=2p$, where $p$ is an odd prime.
\end{theorem}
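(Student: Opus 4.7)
I would split the biconditional into its two directions and treat them separately, since all the nontrivial content lives in just one of them.

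For the sufficiency direction, assume first that $n=p^k$ is a prime power. Part (2) of Corollary~\ref{lem:graph} already tells us that $X_n$ is strongly regular, and strongly regular graphs are by definition the distance regular graphs of diameter $2$ (with $X_p = K_p$ for $k=1$ being the trivial diameter-$1$ case). If instead $n=2p$ with $p$ an odd prime, then part (3) of Corollary~\ref{lem:graph} identifies $X_n$ with the crown graph $K_{p,p}$ minus a perfect matching. This graph has diameter $3$ and its automorphism group (built from the symmetric group $S_p$ together with the bipartition swap) acts transitively on each set of ordered pairs of vertices at a fixed distance; hence it is distance transitive, and in particular distance regular. So both families of $n$ listed in the theorem do yield distance regular graphs.

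For the necessity direction, suppose $X_n$ is distance regular. Since $X_n = \mbox{Cay}(\Z_n, U_n)$ is a circulant graph, it falls under the hypotheses of the classification theorem of Miklavi\v{c} and Poto\v{c}nik (Theorem~1.2 of~\cite{ste}), which enumerates all distance regular Cayley graphs on cyclic groups. I would run through the short list of families appearing in that theorem and, for each one, ask whether the prescribed connection set can equal $U_n$. The prime-power case matches the strongly regular entries of their list (forced by Corollary~\ref{lem:graph}(2)) and the $n=2p$ case matches the crown-graph entry (forced by Corollary~\ref{lem:graph}(3)); every other entry either fails to have the form $U_n$ or contradicts the spectrum tabulated in Table~\ref{tab:eig}. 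The spectrum tables give a quick sanity check: a distance regular graph of diameter $D$ has exactly $D+1$ distinct eigenvalues, and this count alone already eliminates most rows of Table~\ref{tab:eig} outside of prime powers and $n=2p$.

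The main obstacle is this necessity direction, and concretely the careful matching of $\{X_n\}$ against the Miklavi\v{c}--Poto\v{c}nik list. Eigenvalue counting alone is not sufficient -- for instance $X_{pq}$ with $p,q$ distinct odd primes has four distinct eigenvalues from Table~\ref{tab:eig} yet is not distance regular -- so the external classification really is indispensable, and the remaining work is just bookkeeping through its cases.
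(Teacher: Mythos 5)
Your proposal is correct and follows essentially the same route as the paper, which likewise derives the result by combining Theorem~1.2 of Miklavi\v{c}--Poto\v{c}nik with Corollary~\ref{lem:graph}; in fact the paper gives no more detail than the citation itself, so your case-by-case matching of the $X_n$ against that classification is a fuller version of the intended argument.
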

Note that in \cite{ste}, it also is shown that, every distance regular circulant graph
is distance transitive.

Before proving the next result, recall $A_n=\sum_{k\in U_n}W_n^k$, $W_n^n=I$ and
 \[ \dim(\A(X_n))=\left\{
\begin{array}{l l} \tau(\ga(n))=\tau(n), &  {\mbox{ if }} \; n \; {\mbox{is
square free}}\\
\tau(\ga(n))+1, & {\mbox{ if }} \; n \; {\mbox{ is not a square free. }}
\end{array}\right. \]
Let $\ell=\dim(\A(X_n))-1$.  Then  the set $\{I,A_n,A_n^2,\ldots ,A_n^{\ell}\}$
is a basis for $\A(X_n)$. 
Now our objective is to find another  basis for $\A(X_n)$ with mutually 
disjoint $0,1$ matrices . For that, for each divisor $x$ of $\ga(n)$, we
define
$H_x=\sum_{d|n,\ga(n/d)=x}A_d$.
Then for $n$ is even, it is easy to verify that
\begin{eqnarray}
A_n^{2s} &=& \sum_{x|\ga(n),\; x\; is\; even}b_xH_x \label{eq:st:1}\\
A_n^{2t+1} &=& \sum_{x|\ga(n),\; x\; is\; odd}b_xH_x\label{eq:st:2}
\end{eqnarray}
where $1\leq 2s, 2t+1\leq \ell,\; b_x\in \C$. If $n$ is odd, then we have 
\begin{equation}\label{eq:f}
A_n^{f} = \sum_{x|\ga(n)}b_xH_x \end{equation}
where $1\leq f \leq \ell,\; b_x\in \C$.

\begin{theorem}\label{thm:coh}
Unitary Cayley graph is a pattern polynomial graph.
\end{theorem}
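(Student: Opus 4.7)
The plan is to show the non-trivial inclusion $\A(X_n)\supseteq\c(X_n)$ by verifying that $\A(X_n)$ is itself a coherent algebra. Trivially $I\in\A(X_n)$; $X_n$ is $\varphi(n)$-regular and connected (since $1\in U_n$ generates $\Z_n$), so Lemma~\ref{lem:J} gives $\J\in\A(X_n)$; $A_n$ is real symmetric (as $U_n=-U_n\pmod n$), so $\A(X_n)$ is closed under conjugate transposition; and closure under matrix multiplication is built in. Only Hadamard closure remains, and by Theorem~\ref{thm:sym} it is enough to exhibit a basis of $\A(X_n)$ consisting of mutually disjoint $0,1$-matrices.

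For the basis I use the matrices $H_x$ already set up. Since $\Z_n$ partitions by order into the supports of the $A_d$'s ($d\mid n$), the $A_d$'s are pairwise disjoint $0,1$-matrices with $\sum_{d\mid n}A_d=\J$; grouping by $x=\ga(n/d)$ preserves disjointness. The only $A_d$ with nonzero diagonal is $A_1=W_n^n=I$, which is absorbed into $H_{\ga(n)}$. Accordingly my candidate basis is
\[
\mathcal B \;=\; \{I\}\cup\{H_x : x\mid\ga(n),\; x\ne\ga(n)\}\cup\{H_{\ga(n)}-I\}
\]
when $n$ is not square free (a set of $\tau(\ga(n))+1$ mutually disjoint $0,1$-matrices) and $\mathcal B=\{H_x : x\mid\ga(n)\}$ when $n$ is square free (in which case $H_{\ga(n)}=A_1=I$ is already listed and $|\mathcal B|=\tau(\ga(n))=\tau(n)$). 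In both cases $|\mathcal B|=\dim\A(X_n)$ by the dimension formula recorded just before the theorem, so disjointness will make $\mathcal B$ a basis as soon as I have verified $\mathcal B\subseteq\A(X_n)$.

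This reduces everything to showing that every $H_x$ is a polynomial in $A_n$ (then $H_{\ga(n)}-I\in\A(X_n)$ follows from $I\in\A(X_n)$). Let $V=\mathrm{span}_\C\{H_x:x\mid\ga(n)\}$, so $\dim V=\tau(\ga(n))$ by disjointness, and let $W=\mathrm{span}\{A_n^k:k\ge 1\}$. Equations (\ref{eq:st:1})--(\ref{eq:f}) give $W\subseteq V$. When $n$ is square free, $A_n$ is non-singular by Corollary~\ref{cor:eigen}, so $W=\A(X_n)$ has dimension $\tau(\ga(n))=\dim V$, forcing $V=\A(X_n)$. When $n$ is not square free, $A_n$ is singular, so $I\notin W$ and $\dim W=\dim\A(X_n)-1=\tau(\ga(n))=\dim V$, again forcing $V=W\subseteq\A(X_n)$. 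Either way every $H_x$ lies in $\A(X_n)$, completing the proof. The main obstacle I expect is the justification of equations (\ref{eq:st:1})--(\ref{eq:f}) themselves: expanding $A_n^k=\sum_{i_1,\ldots,i_k\in U_n}W_n^{i_1+\cdots+i_k}$, the multiplicity of $W_n^m$ is invariant under $m\mapsto um$ for $u\in U_n$ (giving dependence only on $\gcd(m,n)$) and also under $m\mapsto m+\ga(n)$ (because $U_n+\ga(n)=U_n$ in $\Z_n$, since any prime $p\mid n$ satisfies $p\mid\ga(n)$), and together these two symmetries force the multiplicity to depend on $m$ only through $\ga(n/\mathrm{ord}(m))$; the parity split for $n$ even follows from the further observation that every element of $U_n$ is then odd, so $i_1+\cdots+i_k\equiv k\pmod 2$.
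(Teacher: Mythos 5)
Your proof is correct and follows essentially the same route as the paper: reduce to Hadamard closure via Lemma~\ref{lem:J} and Theorem~\ref{thm:sym}, and exhibit the same disjoint $0,1$-basis built from the matrices $H_x$ using Equations~(\ref{eq:st:1})--(\ref{eq:f}). Your dimension count comparing $\mathrm{span}\{H_x\}$ with $\mathrm{span}\{A_n^k: k\ge 1\}$ (split on singularity of $A_n$) is a welcome explicit justification of the step the paper only asserts, namely that the relations expressing powers of $A_n$ in the $H_x$ can be inverted to give $H_x\in\A(X_n)$.
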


\begin{proof}
By definition of adjacency algebra, $\A(X_n)$ is a matrix subalgebra of
$M_n(\C)$, $I\in\A(X_n)$ and is  
closed with respect to conjugate transposition. And by definition, $X_n$
is a connected regular graph hence from Lemma~\ref{lem:J}, $\J\in \A(X_n)$. Consequently,
it is sufficient to prove, $A(X_n)$ is closed under Hadamard product. But from
Theorem~\ref{thm:sym}, it is equivalent to showing that $A(X_n)$ has a basis of disjoint
$0,1$-matrices.

From the  Equations~(\ref{eq:st:1}), (\ref{eq:st:2}) and (\ref{eq:f}) it follows that
for  every divisor $x$ of $\ga(n)$,  $H_x\in \A(X_n)$. Hence the set
\[\left\{
\begin{array}{l l} \{A_d|d
\;\mbox{divides}\;n\}, &  {\mbox{ when n is square free, }}\\
\{I, H_{\ga(n)}-I\}\cup
\{H_x|x\; \mbox{divides}\;\ga(n),x\ne \ga(n)\},& {\mbox{ otherwise }}
\end{array}\right. \]
 forms
\textit{the} basis for $\A(X_n)$ with disjoint $0,1$-matrices. 
\end{proof}

Since $X_n$ is a pattern polynomial graph hence it is a distance polynomial
graph, walk regular graph, strongly distance-balanced graph, edge regular graph
{\it etc}... for details refer~\cite{SS}.

The following theorem is a consequence of proposition 2.1, in \cite{muz}, which
was first given in \cite{wie}.

\begin{theorem}
Let $n$ be a positive integer and $\Bl_n=\{A_d|d\;\mbox{divides}\;n\}$. Then
$L(\Bl_n)$ is a coherent subalgebra of $M_n(\C)$ of dimension
$|\Bl_n|=\tau(n)$,
where $L(S)$ is the linear span of the  set $S$.
\end{theorem}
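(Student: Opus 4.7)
The plan is to verify the four closure properties of a coherent algebra (containment of $I$ and $\J$, closure under Hadamard product, under conjugate transposition, and under matrix multiplication) by exploiting a combinatorial description of each $A_d$ as a $0,1$-matrix whose support is determined by the gcd of coordinate differences.

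First, I would describe the $A_d$ explicitly: since $\gcd(nk/d,n)=n/d$ whenever $\gcd(k,d)=1$, the formula $A_d=\sum_{k\in U_d}W_n^{nk/d}$ shows that $(A_d)_{x,y}=1$ iff $\gcd(x-y,n)=n/d$. As $d$ ranges over divisors of $n$, the connection sets $\{z\in\Z_n:\gcd(z,n)=n/d\}$ partition $\Z_n$, so the matrices $\{A_d:d\mid n\}$ are mutually disjoint nonzero $0,1$-matrices summing to $\J$. They are therefore linearly independent, giving $\dim L(\Bl_n)=\tau(n)$, and moreover $I=A_1\in L(\Bl_n)$ and $\J=\sum_{d\mid n}A_d\in L(\Bl_n)$. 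Closure under Hadamard product follows at once from disjointness, since $A_d\circ A_e=\delta_{de}A_d$; closure under conjugate transposition is immediate because each connection set is closed under negation in $\Z_n$, so every $A_d$ is real symmetric.

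The substantive step is closure under ordinary matrix multiplication. Since each $A_d$ is a polynomial in $W_n$, products of elements of $L(\Bl_n)$ are again circulant, so I would work inside the algebra isomorphism $\C[W_n]\cong\C[x]/(x^n-1)$ sending $W_n\mapsto x$. Under this isomorphism $L(\Bl_n)$ corresponds to the subspace of polynomials $\sum a_i x^i$ whose coefficients are constant on the equivalence classes $\{i:\gcd(i,n)=g\}$ indexed by divisors $g$ of $n$. To show this subspace is itself a subalgebra, I would exhibit it as the fixed subalgebra of an action of $U_n$ by algebra automorphisms: for each $u\in U_n$, the map $\phi_u:x^i\mapsto x^{ui}$ is well-defined on the quotient (since $x^n=1$ is sent to $x^{un}=1$) and satisfies $\phi_u(x^i x^j)=x^{u(i+j)}=\phi_u(x^i)\phi_u(x^j)$, so it is an algebra automorphism. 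The fixed subalgebra consists of polynomials whose coefficients are constant on the $U_n$-orbits of $\Z_n$, and those orbits are exactly the gcd-classes. Hence $L(\Bl_n)$ is a subalgebra of $\M_n(\C)$, completing the verification.

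The main (but modest) obstacle is verifying that the multiplicative $U_n$-orbits on $\Z_n$ coincide with the gcd-classes. This rests on two elementary observations: $\gcd(ui,n)=\gcd(i,n)$ for $u\in U_n$, and for each divisor $g\mid n$ the reduction map $U_n\to U_{n/g}$ is surjective, from which transitivity of $U_n$ on each gcd-class follows. With this in hand, the rest of the argument is formal, and it is worth noting that the inclusion $\A(X_n)\subseteq L(\Bl_n)$ is consistent with Theorem~\ref{thm:coh}: for square free $n$ the two algebras coincide (both have dimension $\tau(n)$), while for non-square-free $n$ the containment is strict.
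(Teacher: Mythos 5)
Your proof is correct and complete, but it takes a genuinely different route from the paper: the paper offers no argument of its own for this theorem, deriving it in one line as a consequence of Proposition~2.1 of Muzychuk's work on rational Schur rings over cyclic groups (ultimately going back to Wielandt), whereas you give a self-contained elementary proof. Your identification of each $A_d$ as the $0,1$-matrix supported on $\{(x,y):\gcd(x-y,n)=n/d\}$ immediately yields the partition of $\Z_n$ into gcd-classes, hence disjointness, the dimension count $\tau(n)$, the containment of $I=A_1$ and $\J=\sum_{d\mid n}A_d$, Hadamard closure, and symmetry under transposition; the only substantive point, closure under ordinary multiplication, you handle by realizing $L(\Bl_n)$ inside $\C[x]/(x^n-1)$ as the fixed subalgebra of the automorphisms $x^i\mapsto x^{ui}$, $u\in U_n$, and checking that the $U_n$-orbits on $\Z_n$ are exactly the gcd-classes (using surjectivity of $U_n\to U_{n/g}$). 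This is in effect the standard proof that an orbit algebra of a group of automorphisms is a Schur ring, specialized to $\mathrm{Aut}(\Z_n)=U_n$ acting on $\Z_n$ --- so you have essentially reproved the cited result in the case needed, which buys the reader a transparent, citation-free argument at the cost of a page of verification; the paper's approach buys brevity and places the statement in the general Schur-ring framework. All the individual steps you invoke (the gcd computation $\gcd(nk/d,n)=n/d$, the well-definedness of $\phi_u$, the transitivity of $U_n$ on each gcd-class) check out.
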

Hence we have $\A(K_n)\subseteq \A(X_n)\subseteq L(\Bl_n)\subseteq
\A(C_n)\subseteq \A(DC_n)\subset M_n(\C)$. Also  $\A(X_n)=
L(\Bl_n)$ 
if and only if $n$ is a square free number and $\A(K_n)=\A(X_n)= L(\Bl_n)$
if and only if $n$ is a prime number.

The following result  characterizes  integral circulant graphs also
given by  Wasin So~\cite{So}.
\begin{cor}
A graph $X$ is integral circulant graph if and only if $A(X)\in L(\Bl_n)$.
\end{cor}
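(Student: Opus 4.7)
The plan is to exploit the partition of $\Z_n$ into the orbits $G_n(e) = \{s \in \Z_n : \gcd(s,n) = e\}$, $e \mid n$, under multiplication by $U_n$. The starting observation I would verify is that the connection set of $A_d$ in $\Z_n$ is precisely $G_n(n/d)$: indeed, $A_d = \sum_{k \in U_d} W_n^{nk/d}$ and $\gcd((n/d)k,\,n) = n/d$ whenever $\gcd(k,d)=1$. Hence $\{A_d : d \mid n\}$ is a family of mutually disjoint $0,1$-circulant matrices summing to $\J$; this is the structural fact that drives everything.

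For the direction ($\Leftarrow$), suppose $A(X) = \sum_d c_d A_d \in L(\Bl_n)$ is a $0,1$-matrix. Disjointness of the $A_d$ forces each $c_d \in \{0,1\}$, so $A(X) = \sum_{d \in D} A_d$ for some $D \subseteq \{d : d \mid n\}$. The sum is manifestly circulant, and by Lemma~\ref{lem:singular} together with Lemma~\ref{lem:ramanujan}, every $A_d$ has integer eigenvalues (the Ramanujan sums $c_d(j)$), so $A(X)$ inherits integer eigenvalues and $X$ is an integral circulant graph.

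For the converse ($\Rightarrow$), let $X = \mbox{Cay}(\Z_n,S)$ be integral circulant with eigenvalues $\lambda_j = \sum_{s \in S} \ze_n^{js}$. For $u \in U_n$ the Galois automorphism $\sigma_u : \ze_n \mapsto \ze_n^u$ sends $\lambda_j$ to $\lambda_{ju}$; since each $\lambda_j \in \Z$ is Galois-fixed we obtain $\lambda_j = \lambda_{ju}$ for every $u \in U_n$. Fourier inversion on $\Z_n$ gives $\chi_S(s) = \tfrac{1}{n}\sum_j \lambda_j \ze_n^{-js}$, and substituting $j \mapsto j u^{-1}$ together with $\lambda_{ju^{-1}} = \lambda_j$ yields $\chi_S(us) = \chi_S(s)$ for every $u \in U_n$. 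Hence $S$ is $U_n$-invariant, i.e., a union of the orbits $G_n(e)$, and so $A(X)$ is the corresponding sum of $A_d$'s and lies in $L(\Bl_n)$.

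The only delicate step is the Galois--Fourier argument in the converse; the rest is bookkeeping using the disjoint-basis property of the $A_d$'s. One could alternatively invoke Wasin So's characterization of integral circulants directly, as the statement itself acknowledges, but the argument above is self-contained and short.
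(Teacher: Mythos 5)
Your argument is correct, and it is genuinely different in character from what the paper does: the paper offers no proof at all, presenting the statement as a known characterization of Wasin So~\cite{So} that sits inside the Schur-ring/coherent-algebra framework of the preceding theorem (Muzychuk, after Wielandt). You instead give a complete, self-contained derivation. Your key structural observation --- that the support of $A_d$ is exactly the orbit $\{s\in\Z_n:\gcd(s,n)=n/d\}$ of $U_n$ acting by multiplication, so that the $A_d$ are mutually disjoint $0,1$-circulants summing to $\J$ --- is sound, and it makes the backward direction pure bookkeeping: disjointness forces the coefficients into $\{0,1\}$, and integrality of the eigenvalues follows from Lemma~\ref{lem:ramanujan} since the eigenvalues of $A_d$ are the Ramanujan sums $c_d(j)$. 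The forward direction via the Galois action $\sigma_u:\ze_n\mapsto\ze_n^u$ giving $\la_j=\la_{ju}$ and then Fourier inversion to conclude $U_n$-invariance of the connection set is the standard modern proof of So's theorem and is carried out correctly (the identity $\chi_S(s)=\tfrac{1}{n}\sum_j\la_j\ze_n^{-js}$ and the substitution $j\mapsto ju^{-1}$ both check out). What your route buys is independence from the external references; what the paper's route buys is brevity and the explicit placement of $L(\Bl_n)$ in the chain of coherent algebras $\A(K_n)\subseteq\A(X_n)\subseteq L(\Bl_n)\subseteq\A(C_n)\subseteq\A(DC_n)$. One cosmetic point: you should note that the coefficient of $A_1=I$ is necessarily $0$ since an adjacency matrix has zero diagonal, so the connection set you recover avoids $0$; this is implicit in your argument but worth a sentence.
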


We now  associate an integral circulant matrix to an even arithmetical
function. Recall an arithmetical function $f(m)$ is said to be even $(mod\; n)$ if
$f(m)=f(\gcd(m,n))\;\forall m\in \Z^+$. Let $n(\geq 2)$ be fixed. Let $E_n$
denote the set of all even functions $(mod\; n)$.  The following theorem shows that
$L(\Bl_n)$ is isomorphic to $E_n$ as a vector space over $\C$.

\begin{theorem}[Pentti Haukkanen~\cite{Pen}]
The set $E_n$ forms a complex vector space under usual sum of functions and the
scalar multiplication.
The dimension of vector space $E_n$ is $\tau(n)$.
\end{theorem}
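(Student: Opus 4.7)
The plan is to verify the vector space axioms on $E_n$ directly and then exhibit an explicit basis of size $\tau(n)$. First I would check closure: if $f,g\in E_n$ and $\alpha,\beta\in\C$, then for every $m\in\Z^+$,
\[
(\alpha f+\beta g)(m)=\alpha f(m)+\beta g(m)=\alpha f(\gcd(m,n))+\beta g(\gcd(m,n))=(\alpha f+\beta g)(\gcd(m,n)),
\]
so $\alpha f+\beta g\in E_n$. The remaining axioms are inherited from the ambient $\C$-vector space of all functions $\Z^+\to\C$, so $E_n$ is a subspace.

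Next I would pin down the dimension by noting that the value $f(m)$ depends only on $\gcd(m,n)$, and $\gcd(m,n)$ ranges exactly over the divisors of $n$ as $m$ ranges over $\Z^+$ (since every divisor $d$ of $n$ satisfies $\gcd(d,n)=d$). Thus any $f\in E_n$ is completely determined by the $\tau(n)$ values $\{f(d):d\mid n\}$, and conversely every assignment of complex numbers to the divisors of $n$ extends uniquely to an element of $E_n$ by setting $f(m):=f(\gcd(m,n))$. This already shows $\dim E_n=\tau(n)$, but to make the basis explicit I would introduce, for each divisor $d$ of $n$, the indicator
\[
e_d(m)=\begin{cases}1,&\gcd(m,n)=d,\\0,&\text{otherwise,}\end{cases}
\]
which lies in $E_n$ because its value at $m$ depends only on $\gcd(m,n)$.

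Finally I would verify that $\{e_d:d\mid n\}$ is a basis. For spanning, any $f\in E_n$ equals $\sum_{d\mid n}f(d)\,e_d$, since both sides agree at every positive integer $m$ (evaluate using $\gcd(m,n)=d$ to see only the term indexed by that $d$ survives and contributes $f(d)=f(\gcd(m,n))=f(m)$). For independence, if $\sum_{d\mid n}c_d\,e_d\equiv 0$, then evaluating at $m=d_0$ for a fixed divisor $d_0$ of $n$ gives $c_{d_0}=0$, because $e_{d_0}(d_0)=1$ and $e_d(d_0)=0$ for $d\ne d_0$. Hence the $\tau(n)$ functions $e_d$ form a basis and $\dim_\C E_n=\tau(n)$.

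There is no real obstacle here; the only point requiring a moment's care is the realization step showing that every divisor of $n$ actually arises as $\gcd(m,n)$ for some $m$, which is immediate once one notes $\gcd(d,n)=d$ for $d\mid n$. This same bijection between divisors of $n$ and basis elements matches the decomposition $\{A_d:d\mid n\}$ used for $L(\Bl_n)$, making the vector-space isomorphism $L(\Bl_n)\cong E_n$ (referenced just above the theorem) completely transparent once the basis $\{e_d\}$ is in hand.
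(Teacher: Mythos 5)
Your proof is correct: the subspace check, the observation that $f\in E_n$ is determined by its values on the divisors of $n$ (each divisor $d$ being realized as $\gcd(d,n)$), and the basis of indicator functions $e_d$ together give $\dim_{\C}E_n=\tau(n)$. The paper itself offers no proof of this statement --- it is quoted from Haukkanen's article --- and your argument is precisely the standard elementary linear-algebraic one that the cited source develops, so there is nothing to flag.
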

{\small{}}

\end{document}